\theoremstyle{plain}
\newtheorem{thm}{\protect\theoremname}
  \theoremstyle{plain}
  \theoremstyle{remark}
  \newtheorem{claim}[thm]{\protect\claimname}
  \providecommand{\claimname}{Claim}
  \providecommand{\lemmaname}{Lemma}
\providecommand{\theoremname}{Theorem}
\begin{document}

\title{A construction of almost Steiner systems\global\long\def\prob{\mathrm{Pr}}
\global\long\def\E{\mathbb{E}}
\global\long\def\Bin{\mathrm{Bin}}
\global\long\def\Po{\mathrm{Po}}
}

\author{Asaf Ferber%
\thanks{School of Mathematical Sciences, Raymond and Beverly Sackler Faculty
of Exact Sciences, Tel Aviv University, Tel Aviv, 69978, Israel. Email:
ferberas@tau.ac.il.%
} \and Rani Hod%
\thanks{School of Computer Science, Raymond and Beverly Sackler Faculty of
Exact Sciences, Tel Aviv University, Tel Aviv, 69978, Israel. Email:
rani.hod@cs.tau.ac.il. Research supported by an ERC advanced grant.%
} \and Michael Krivelevich%
\thanks{School of Mathematical Sciences, Raymond and Beverly Sackler Faculty
of Exact Sciences, Tel Aviv University, Tel Aviv, 69978, Israel. Email:
krivelev@tau.ac.il. Research supported in part by USA-Israel BSF Grant
2010115 and by grant 912/12 from the Israel Science Foundation.%
} \and Benny Sudakov%
\thanks{Department of Mathematics, University of California, Los Angeles,
CA, USA. Email: bsudakov@math.ucla.edu. Research supported in part
by NSF grant DMS-1101185, by AFOSR MURI grant FA9550-10-1-0569 and
by a USA-Israel BSF grant.%
}}
\maketitle
\begin{abstract}
Let $n$, $k$, and $t$ be integers satisfying $n>k>t\ge2$. A Steiner
system with parameters $t$, $k$, and $n$ is a $k$-uniform hypergraph
on $n$ vertices in which every set of $t$ distinct vertices is contained
in exactly one edge. An outstanding problem in Design Theory is to
determine whether a nontrivial Steiner system exists for $t\geq6$.

In this note we prove that for every $k>t\ge2$ and sufficiently large
$n$, there exists an almost Steiner system with parameters $t$,
$k$, and $n$; that is, there exists a $k$-uniform hypergraph on
$n$ vertices such that every set of $t$ distinct vertices is covered
by either one or two edges.
\end{abstract}

\section{Introduction}

Let $n$, $k$, $t$, and $\lambda$ be positive integers satisfying
$n>k>t\geq2$. A \emph{$t$-$\left(n,k,\lambda\right)$-design} is
a $k$-uniform hypergraph $\mathcal{H}=\left(X,\mathcal{F}\right)$
on $n$ vertices with the following property: every $t$-set of vertices
$A\subset X$ is contained in exactly $\lambda$ edges $F\in\mathcal{F}$.
The special case $\lambda=1$ is known as a \emph{Steiner system}
with parameters $t$, $k$, and $n$, named after Jakob Steiner who
pondered the existence of such systems in 1853. Steiner systems, $t$-designs%
\footnote{That is, $t$-$\left(n,k,\lambda\right)$-designs for some parameters
$n$,$k$, and $\lambda$.%
} and other combinatorial designs turn out to be useful in a multitude
of applications, e.g., in coding theory, storage systems design, and
wireless communication. For a survey of the subject, the reader is
referred to~\cite{handbook-of-combinatorial-designs}.

A counting argument shows that a Steiner triple system --- that is,
a $2$-$\left(n,3,1\right)$-design --- can only exist when $n\equiv1\textrm{ or }n\equiv3\pmod6$.
For every such $n$, this is achieved via constructions based on symmetric
idempotent quasigroups. Geometric constructions over finite fields
give rise to some further infinite families of Steiner systems with
$t=2$ and $t=3$. For instance, for a prime power $q$ and an integer
$m\ge2$, affine geometries yield $2$-$\left(q^{m},q,1\right)$-designs,
projective geometries yield $2$-$\left(q^{m}+\cdots+q^{2}+q+1,q+1,1\right)$-designs
and spherical geometries yield $3$-$\left(q^{m}+1,q,1\right)$-designs.

For $t=4$ and $t=5$, only finitely many nontrivial constructions
of Steiner systems are known; for $t\ge6$, no constructions are known
at all.

\medskip{}

Before stating our result, let us extend the definition of $t$-designs
as follows. Let $n$, $k$, and $t$ be positive integers satisfying
$n>k>t\geq2$ and let $\Lambda$ be a set of positive integers. A
$t$-$\left(n,k,\Lambda\right)$-design is a $k$-uniform hypergraph
$\mathcal{H}=\left(X,\mathcal{F}\right)$ on $n$ vertices with the
following property: for every $t$-set of vertices $A\subset X$,
the number of edges $F\in\mathcal{F}$ that contain $A$ belongs to
$\Lambda$. Clearly, when $\Lambda=\left\{ \lambda\right\} $ is a
singleton, a $t$-$\left(n,k,\left\{ \lambda\right\} \right)$-design
coincides with a $t$-$\left(n,k,\lambda\right)$-design as defined
above.

Not able to construct Steiner systems for large $t$, Erd\H{o}s and
Hanani~\cite{EH63} aimed for large partial Steiner systems; that
is, $t$-$\left(n,k,\left\{ 0,1\right\} \right)$-designs with as
many edges as possible. Since a Steiner system has exactly ${n \choose t}/{k \choose t}$
edges, they conjectured the existence of partial Steiner systems with
$\left(1-o\left(1\right)\right){n \choose t}/{k \choose t}$ edges.
This was first proved by Rödl~\cite{Rodl85} in 1985, with further
refinements~\cite{Grable99,Kim01,KR98} of the $o\left(1\right)$
term, as stated in the following theorem:
\begin{thm}[Rödl]
\label{thm:partial-steiner}Let $k$ and $t$ be integers such that
$k>t\ge2$ . Then there exists a partial Steiner system with parameters
$t$, $k$, and $n$ covering all but $o\left(n^{t}\right)$ of the
$t$-sets.
\end{thm}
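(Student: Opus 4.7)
The plan is to reduce the problem to finding an almost-perfect matching in an auxiliary hypergraph and then invoke the semi-random (``nibble'') method. Define a $\binom{k}{t}$-uniform auxiliary hypergraph $\mathcal{G}$ whose vertex set is $\binom{[n]}{t}$ and whose edge set is $\{e_K : K \in \binom{[n]}{k}\}$, where $e_K := \binom{K}{t}$. A matching $\mathcal{M}$ in $\mathcal{G}$ is precisely a family of $k$-subsets of $[n]$ whose pairwise intersections all have size less than $t$, that is, a partial Steiner system; moreover, $\mathcal{M}$ covers exactly $|\mathcal{M}|\binom{k}{t}$ of the $t$-sets, so the theorem is equivalent to producing a matching in $\mathcal{G}$ that leaves only $o\bigl(\binom{n}{t}\bigr)$ vertices uncovered.

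Next, I would verify the conditions needed to run the nibble. Each vertex $T \in \binom{[n]}{t}$ lies in exactly $D := \binom{n-t}{k-t} = \Theta(n^{k-t})$ edges, so $\mathcal{G}$ is $D$-regular. For any two distinct $t$-sets $T_1 \ne T_2$ we have $|T_1 \cup T_2| \ge t+1$, so the number of edges containing both is at most $\binom{n-t-1}{k-t-1} = O(n^{k-t-1}) = o(D)$; hence the maximum codegree of $\mathcal{G}$ is $o(D)$. Since the uniformity $r = \binom{k}{t}$ is a fixed constant and $D \to \infty$ with $n$, $\mathcal{G}$ satisfies the hypotheses of the Pippenger--Spencer matching theorem, which asserts that any such hypergraph admits a matching covering all but an $o(1)$-fraction of its vertices. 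Applied here, this immediately yields the desired partial Steiner system.

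The main technical obstacle is the nibble argument underlying Pippenger--Spencer. One chooses a small parameter $p = p(n)$ and, in each round, independently retains each surviving edge with probability $p$, then deletes any retained edge that conflicts with another and removes all newly covered vertices. To iterate this for many rounds, one must show that the residual hypergraph remains approximately regular and that its maximum codegree stays $o$ of its typical degree. This requires concentration inequalities (for instance Azuma's or, more sharply, Talagrand's) applied to the degree of each surviving vertex and the codegree of each surviving pair, combined with a union bound over the polynomially many such quantities. Provided $n$ is large enough in terms of $k$, $t$, and the target error, the accumulated deviations remain controllable, and after sufficiently many rounds a greedy finish on the remaining edges completes the matching. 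Alternative routes to the same matching bound --- via R\"odl's original inductive nibble or Grable's and Kim's refinements --- would also suffice, but all share this concentration-and-iteration core as the main difficulty.
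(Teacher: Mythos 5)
Your reduction to an almost-perfect matching in the $\binom{k}{t}$-uniform auxiliary hypergraph, together with the degree/codegree verification and an appeal to the Pippenger--Spencer (nibble) theorem, is correct and is precisely the standard route; the paper itself does not prove this statement but cites it as R\"odl's theorem (with the refinements of Grable, Kim, and Kostochka--R\"odl), so your argument matches the approach the paper relies on. The only caveat is that you treat the nibble iteration itself as a black box, which is acceptable here since the paper does the same.
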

Theorem~\ref{thm:partial-steiner} can also be rephrased in terms of
a covering rather than a packing; that is, it asserts the existence
of a system with $\left(1+o\left(1\right)\right){n \choose t}/{k
\choose t}$ edges such that every $t$-set is covered at least once
(see, e.g.,~\cite[page 56]{the-probabilistic-method}). Nevertheless,
some $t$-sets might be covered multiple times (perhaps even
$\omega\left(1\right)$ times). It is therefore natural to ask for
$t$-$\left(n,k,\left\{ 1,\ldots,r\right\} \right)$-designs, where
$r$ is as small as possible. The main aim of this short note is to
show how to extend Theorem~\ref{thm:partial-steiner} to cover all
$t$-sets at least once but at most twice.
\begin{thm}
\label{thm:main}Let $k$ and $t$ be integers such that $k>t\ge2$.
Then, for sufficiently large $n$, there exists a $t$-$\left(n,k,\left\{ 1,2\right\} \right)$-design.
\end{thm}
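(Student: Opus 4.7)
My plan is to apply Theorem~\ref{thm:partial-steiner} once and then patch the small set of uncovered $t$-tuples with a second, carefully chosen family of $k$-edges. Let $\mathcal{P}\subseteq\binom{\left[n\right]}{k}$ be the partial Steiner system provided by Theorem~\ref{thm:partial-steiner}, and let $U\subseteq\binom{\left[n\right]}{t}$ denote its set of uncovered $t$-tuples, so that $\left|U\right|=o\left(n^{t}\right)$ and every $t$-subset of $\left[n\right]$ is contained in at most one edge of $\mathcal{P}$. I would then construct a second family $\mathcal{N}\subseteq\binom{\left[n\right]}{k}$ satisfying (i) every $T\in U$ is contained in some edge of $\mathcal{N}$, and (ii) every $t$-subset of $\left[n\right]$ is contained in at most one edge of $\mathcal{N}$. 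Given such an $\mathcal{N}$, the union $\mathcal{P}\cup\mathcal{N}$ is a $t$-$\left(n,k,\left\{ 1,2\right\} \right)$-design: each $T\in U$ is covered exactly once (by $\mathcal{N}$), while each $t$-set outside $U$ is covered once by $\mathcal{P}$ and at most once more by $\mathcal{N}$, giving total multiplicity in $\left\{ 1,2\right\}$.

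To construct $\mathcal{N}$, I would process the elements of $U$ one at a time, at each step choosing a $k$-superset $F_{T}$ of $T$ that shares no $t$-subset with any previously selected edge. A simple volume estimate shows that each already-used $t$-subset $T'\neq T$ blocks at most $\binom{n-\left|T\cup T'\right|}{k-\left|T\cup T'\right|}=O\left(n^{k-t-1}\right)$ of the $\binom{n-t}{k-t}=\Theta\left(n^{k-t}\right)$ possible extensions of $T$ to a $k$-set, so a valid choice of $F_{T}$ is available whenever the current pool of used $t$-subsets has size $o\left(n\right)$.

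The main obstacle is that a direct bound on the total number of used $t$-subsets is only $\binom{k}{t}\left|U\right|=o\left(n^{t}\right)$, which is much larger than the $o\left(n\right)$ threshold required by the greedy step as soon as $t\geq2$. To close this gap I would iterate the patching procedure: apply a R\"odl-nibble / Pippenger--Spencer matching argument on the auxiliary $\binom{k}{t}$-uniform hypergraph whose vertices are the elements of $\binom{\left[n\right]}{t}$ and whose edges are the sets $\binom{F}{t}$ for $F\in\binom{\left[n\right]}{k}$, covering most of $U$ by a partial Steiner system and thus reducing $\left|U\right|$ by a polynomial factor; repeat this reduction a bounded number of times until the residual defect is small enough to be finished off by the direct greedy step. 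The main difficulty, I expect, will be to maintain control on the codegree of the residual defect --- the maximum number of uncovered $t$-sets sharing a given $\left(t-1\right)$-subset --- throughout the iteration, so that the nibble / matching argument can be invoked at each stage.
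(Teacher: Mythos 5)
Your overall skeleton matches the paper's: take one partial Steiner system, let $U$ be its leave, and cover $U$ by a second partial Steiner system $\mathcal{N}$; the reduction to your conditions (i) and (ii) is correct, and so is your diagnosis that a one-pass greedy choice of $F_T$ fails for $t\ge 2$ because the pool of forbidden $t$-subsets has size $o(n^t)\gg n$. The gap is that the device you propose to close this with defers everything to exactly the point where the real work lies. An iterated nibble on the residual defect requires the auxiliary (non-uniform) hypergraph whose vertices are the elements of $U$ and whose edges are the traces $\binom{F}{t}\cap U$ to be approximately regular with small codegrees, and Theorem~\ref{thm:partial-steiner} gives no structural information about $U$ beyond its size: a priori a large part of $U$ could contain a common $(t-1)$-set, or be concentrated on few vertices, in which case the degrees and codegrees of that auxiliary hypergraph are wildly non-uniform and the Pippenger--Spencer machinery does not apply; nor is it clear that one nibble iteration leaves a defect any better structured than the one it started from. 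You flag this codegree control as ``the main difficulty,'' but that difficulty \emph{is} the theorem; as written the proposal is a plan rather than a proof.

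The paper closes the gap at the other end: it strengthens phase one instead of iterating phase two. Theorem~\ref{thm:partial-steiner-ext} (extracted from Grable's proof of Theorem~\ref{thm:partial-steiner}) produces a partial Steiner system whose leave hypergraph satisfies the spread condition $\left(\clubsuit\right)$: every $\ell$-set lies in only $O\left(n^{t-\ell-\varepsilon}\right)$ uncovered $t$-sets. With that in hand the second family is built in one shot: for each leave set $A$ one restricts to continuations containing no other leave set, sparsifies the candidate lists by keeping each $k$-set independently with probability $n^{t-k+\varepsilon/2}$, and shows via a union bound plus Talagrand's inequality (Theorem~\ref{thm:talagrand-ineq}) that every $A$ retains a candidate meeting all other surviving candidates in fewer than $t$ vertices. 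If you want to complete your argument, the missing ingredient is precisely some analogue of $\left(\clubsuit\right)$ for the leave: either import it from phase one, as the paper does, or prove that your nibble iterations create and preserve it, which is not automatic.
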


Our proof actually gives a stronger result: there exists a
$t$-$\left(n,k,\left\{ 1,2\right\} \right)$-design with
$\left(1+o\left(1\right)\right){n \choose t}/{k \choose t}$ edges.

\section{Preliminaries}

In this section we present results needed for the proof of Theorem
\ref{thm:main}.\medskip{}

Given a $t$-$\left(n,k,\left\{ 0,1\right\} \right)$-design $\mathcal{H}=\left(X,\mathcal{F}\right)$,
we define the \emph{leave hypergraph} $\left(X,\mathcal{L}_{\mathcal{H}}\right)$
to be the $t$-uniform hypergraph whose edges are the $t$-sets $A\subset X$
not covered by any edge $F\in\mathcal{F}$.

Following closely the proof of Theorem~\ref{thm:partial-steiner}
appearing in~\cite{Grable99}, we recover an extended form of the
theorem, which is a key ingredient in the proof of our main result.
\begin{thm}
\label{thm:partial-steiner-ext}Let $k$ and $t$ be integers such that
$k>t\ge2$. There exists a constant
$\varepsilon=\varepsilon\left(k,t\right)>0$ such that for
sufficiently large $n$, there exists a partial Steiner system
$\mathcal{H}=\left(X,\mathcal{F}\right)$ with parameters $t$, $k$,
and $n$ satisfying the following property:
\begin{itemize}
\item [$\left(\clubsuit\right)$] For every $0\le\ell <t$,  every set $X'\subset X$ of size $\left|X'\right|=\ell$
is contained in $O\left(n^{t-\ell-\varepsilon}\right)$ edges of the
leave hypergraph.
\end{itemize}
\end{thm}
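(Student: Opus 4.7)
The plan is to run Rödl's nibble following Grable's semi-random method~\cite{Grable99}, but while simultaneously tracking not only the degrees of the auxiliary hypergraph (which suffice for Theorem~\ref{thm:partial-steiner}) but also, for every $\ell<t$ and every $\ell$-subset $X'$ of $X$, the quantity
\[
N_{X'}^{(i)} \;=\; \bigl|\bigl\{A\in\tbinom{X}{t}:X'\subseteq A,\ A \text{ is not yet covered after round }i\bigr\}\bigr|.
\]
I will show that these codegrees decay quasi-randomly and concentrate around their expectation, which after $R=R(k,t)$ rounds of the nibble is of order $n^{t-\ell}\cdot\mu^{R}$ for some constant $\mu=\mu(k,t)<1$; choosing $R$ so that $\mu^{R}=n^{-\varepsilon}$ for a suitably small $\varepsilon=\varepsilon(k,t)>0$ yields $(\clubsuit)$.

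\textbf{Setup.} Consider the $\binom{k}{t}$-uniform auxiliary hypergraph $H$ on vertex set $\binom{X}{t}$ whose hyperedges are the sets $\binom{K}{t}$ for $K\in\binom{X}{k}$. A matching in $H$ is exactly a partial Steiner system on $X$, and the vertices of $H$ missed by the matching are precisely the edges of its leave hypergraph. The hypergraph $H$ is $\binom{k}{t}$-regular of degree $D_0=\binom{n-t}{k-t}=\Theta(n^{k-t})$ with small codegrees, so the nibble is applicable.

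\textbf{The tracked statistics and the nibble.} In round $i$, given the current surviving sub-hypergraph $H_{i}$ on the set of uncovered $t$-sets, independently activate each edge of $H_{i}$ with probability $\eta_i/D_{i-1}$ for an appropriate small constant $\eta_i$, and add to the matching those activated edges that do not share a vertex with any other activated edge. For each $0\le\ell<t$ and each $X'\subseteq X$ with $|X'|=\ell$, set $N_{X'}^{(i)}$ as above, and for every $t$-set $T$ let $d_{T}^{(i)}$ denote the number of surviving $k$-sets containing $T$. The standard nibble analysis, combined with the differential-equation heuristic, gives deterministic target trajectories $\widehat{d}^{(i)}$ and $\widehat{N}_{\ell}^{(i)}$ such that, conditional on all tracked quantities being close to their target after round $i-1$, the expected values after round $i$ remain close to $\widehat{d}^{(i)}$ and $\widehat{N}_{\ell}^{(i)}$. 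Computing the expected change in $N_{X'}^{(i)}$ decomposes, by inclusion-exclusion over the $k$-set added, into a main term proportional to $N_{X'}^{(i-1)}$ and lower-order terms controlled by the hypothesis $(\clubsuit)$ at the previous step.

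\textbf{Concentration and union bound.} Each $N_{X'}^{(i)}$ and each $d_{T}^{(i)}$ is a function of the random subset of activated edges, each of which affects it by at most $O(n^{t-\ell-1})$ resp.\ $O(n^{k-t-1})$ (these are the maximum local codegrees maintained invariantly throughout the process). The Azuma--Hoeffding inequality applied to the Doob martingale of the round therefore gives deviation probability at most $\exp(-n^{\delta})$ for some $\delta=\delta(k,t)>0$. Since there are only $O(n^{t-1})$ statistics to control across all rounds, a union bound succeeds. One aborts the process on the first round at which any tracked quantity leaves its target window; the above shows this happens with probability $o(1)$.

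\textbf{Termination and main obstacle.} Running the nibble for $R$ rounds leaves a partial Steiner system $\mathcal{F}$ whose leave hypergraph satisfies $N_{X'}=O(n^{t-\ell}\mu^{R})$ uniformly for all $\ell$-subsets $X'$; choosing $R$ (a large enough constant depending on $k,t$) so that $\mu^{R}\le n^{-\varepsilon}$ delivers $(\clubsuit)$. The main obstacle is the simultaneous tracking of the codegree statistics for every $\ell<t$: one must verify that the differential equations for $\widehat{N}_{\ell}^{(i)}$ close up, i.e., that the expected one-step change in $N_{X'}^{(i)}$ can be expressed in terms of the tracked quantities only (with negligible error), and one must maintain a uniform bound on the maximum $\ell$-codegree throughout the process to feed into the Azuma bound. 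This is exactly where invoking the codegree invariant at step $i-1$ for lower values of $\ell$ makes the induction self-supporting.
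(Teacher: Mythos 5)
Your overall strategy --- running the R\"odl--Grable semi-random nibble while additionally tracking, for every $\ell<t$ and every $\ell$-set $X'$, the number of uncovered $t$-sets containing $X'$ --- is exactly the route the paper intends: the paper gives no self-contained proof of Theorem~\ref{thm:partial-steiner-ext} and simply asserts that it follows by ``following closely'' the argument of~\cite{Grable99} with these extra codegree statistics. So the plan is the right one, but two of the steps in your sketch do not work as written. The first is the round count: you want the leave codegrees to drop by a polynomial factor $n^{-\varepsilon}$, yet you claim this happens after a \emph{constant} number $R=R(k,t)$ of rounds, each shrinking the tracked quantities by a \emph{constant} factor $\mu=\mu(k,t)<1$. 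Then $\mu^{R}$ is a constant, and ``choosing $R$ so that $\mu^{R}=n^{-\varepsilon}$'' is impossible. You need $R=\Theta(\log n)$ rounds, and then the error analysis changes qualitatively: the relative errors in the tracked trajectories multiply across $\Theta(\log n)$ rounds, so each round must be controlled to within a factor $1+n^{-\delta}$ (not merely $1+o(1)$), which is precisely the bookkeeping that occupies most of~\cite{Grable99} and which your sketch does not engage with.

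The second, more serious, gap is the concentration step. You apply Azuma--Hoeffding to the Doob martingale over the activation choices of a single round, with Lipschitz constant $O\left(n^{t-\ell-1}\right)$ for $N^{(i)}_{X'}$. A round has $\Theta\left(n^{k}\right)$ independent activation trials, so the sum of squared differences is $\Theta\left(n^{k+2(t-\ell-1)}\right)$ and Azuma only controls deviations of order $n^{k/2+t-\ell-1}$, which already exceeds the quantity $N^{(i)}_{X'}=O\left(n^{t-\ell}\right)$ being tracked for every $k\ge 2$; the stated bound $\exp\left(-n^{\delta}\right)$ does not follow and the inequality is vacuous. Even the (worst-case unjustified) Lipschitz constant $O(1)$ would only yield deviations of order $n^{k/2}$, still too large whenever $k\ge 2\left(t-\ell\right)$. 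The statistic does concentrate, but because it is effectively determined by the $\Theta\left(n^{t}\right)$ \emph{activated} edges rather than by all $\Theta\left(n^{k}\right)$ potential ones; extracting this requires a certificate-based inequality such as Talagrand's (Theorem~\ref{thm:talagrand-ineq}, which the paper itself invokes for the analogous issue in Claim~\ref{clm:one-good-set}) or a two-stage exposure / typical-bounded-differences argument as in~\cite{Grable99}. Without repairing this step, the union bound at the end of your argument has nothing to stand on.
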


We also make use of the following probabilistic tool.

%
%

\paragraph{Talagrand's inequality.}

In its general form, Talagrand's inequality is an isoperimetric-type
inequality for product probability spaces. We use the following formulation
from~\cite[pages 232--233]{graph-colouring-and-the-probabilistic-method},
suitable for showing that a random variable in a product space is
unlikely to overshoot its expectation under two conditions:
\begin{thm}[Talagrand]
\label{thm:talagrand-ineq}Let $Z\ge0$ be a non-trivial random variable,
which is determined by $n$ independent trials $T_{1},\ldots,T_{n}$.
Let $c>0$ and suppose that the following properties hold:
\begin{enumerate}[label=\roman*.]
\item ($c$-Lipschitz) changing the outcome of one trial can affect $Z$ by
at most $c$, and
\item (Certifiable) for any $s$, if $Z\ge s$ then there is a set of at
most $s$ trials whose outcomes certify that $Z\ge s$.
\end{enumerate}
Then $\prob\left[Z>t\right]<2\exp\left(-t/16c^{2}\right)$ for any
$t\ge2\E\left[Z\right]+80c\sqrt{\E\left[Z\right]}$.
\end{thm}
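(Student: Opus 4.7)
The plan is to deduce the stated concentration bound from Talagrand's fundamental convex-distance inequality on product probability spaces. The key notion is the \emph{convex distance}
\[
d_T(x, A) = \sup_{\alpha \ge 0,\ \|\alpha\|_2 = 1} \inf_{y \in A} \sum_{i : x_i \neq y_i} \alpha_i,
\]
between a point $x$ in the product space $\Omega = \prod_{i=1}^n \Omega_i$ (with product measure $\mu$) and a measurable set $A \subseteq \Omega$. The first task is to establish the core isoperimetric inequality
\[
\mu(A) \int_{\Omega} \exp\bigl(d_T(x, A)^2 / 4\bigr)\, d\mu(x) \le 1,
\]
proved by induction on $n$. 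The base case is immediate; the induction step projects $A$ onto the first $n-1$ coordinates in two natural ways (the full projection and the slice at fixed $x_n$) and interpolates the two resulting estimates via H\"older's inequality with an exponent tuned to yield exactly the factor $1/4$. A direct application of Markov's inequality then gives $\prob[d_T(X, A) \ge s] \le e^{-s^2/4}/\mu(A)$.

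The second step is to translate this geometric bound into concentration of $Z$ around its median $m$. Setting $A = \{y : Z(y) \le m\}$ ensures $\mu(A) \ge 1/2$, so the task reduces to showing that if $Z(x) \ge s$ with $s \ge m$ then $d_T(x, A) \ge (s - m)/(c \sqrt{s})$. By certifiability there is an index set $I$ with $|I| \le s$ such that any $z$ agreeing with $x$ on $I$ has $Z(z) \ge s$; flipping the coordinates of any $y \in A$ inside $I$ one at a time until they match $x$ produces $y'$ with $Z(y') \ge s$. The $c$-Lipschitz property together with $Z(y) \le m$ then forces at least $(s - m)/c$ such flips, so $|I \cap \{i : x_i \neq y_i\}| \ge (s-m)/c$, and choosing $\alpha_i = |I|^{-1/2}$ on $I$ (and $0$ elsewhere) delivers the desired lower bound on $d_T(x,A)$. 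Combining the two ingredients gives the median-centered tail
\[
\prob[Z \ge m + r] \le 2 \exp\bigl(-r^2/(4c^2(m+r))\bigr), \quad r \ge 0.
\]

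The final step is to pass from $m$ to $\E[Z]$. Here I would apply the Efron--Stein inequality in its self-bounding form: certifiability lets us bound $\sum_i (Z - Z_i')^2$ by $c^2 Z$ (only trials in the certificate can raise $Z$ when re-sampled, and each by at most $c$), yielding the variance bound $\mathrm{Var}(Z) \le c^2 \E[Z]$. Chebyshev's inequality then gives $|m - \E[Z]| \le 2c\sqrt{\E[Z]}$. Consequently, for $t \ge 2\E[Z] + 80c\sqrt{\E[Z]}$ we have $t/2 \ge \E[Z] + 40c\sqrt{\E[Z]} \ge m$, so $t \ge 2m$ and $t - m \ge t/2$; plugging $r = t - m$ into the median bound gives
\[
\frac{(t-m)^2}{4c^2 t} \ge \frac{(t/2)^2}{4 c^2 t} = \frac{t}{16 c^2},
\]
which is exactly the stated conclusion.

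The main obstacle is the inductive proof of the core isoperimetric inequality: it is short but delicate, and the exact choice of H\"older exponent in the induction step is what pins down the constant $1/4$ on which every downstream numerical factor rests. The remaining steps --- extracting a lower bound on $d_T$ from certifiability, and converting median to expectation via the self-bounding variance estimate --- are largely mechanical, but they require careful tracking of absolute constants to confirm that the specific numbers $2$ and $80$ in the hypothesis suffice to force $t \ge 2m$ and thereby produce the clean exponent $-t/(16c^2)$.
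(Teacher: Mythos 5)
The paper offers no proof of this theorem at all: it is quoted as a black-box tool from Molloy and Reed's book, so there is no internal argument to compare against, and your proposal should be judged as a self-contained derivation. On that score it follows the standard route and the constants do work out. The reduction from the convex-distance inequality is correct: with $A=\left\{ Z\le m\right\}$ for a median $m$, certifiability plus the $c$-Lipschitz property gives $d_{T}\left(x,A\right)\ge\left(s-m\right)/\left(c\sqrt{s}\right)$ whenever $Z\left(x\right)\ge s\ge m$ (the certificate $I$ has $\left|I\right|\le s$, at least $\left(s-m\right)/c$ coordinates of any $y\in A$ inside $I$ must differ from $x$, and $\alpha_{i}=\left|I\right|^{-1/2}$ on $I$ does the rest), yielding $\prob\left[Z\ge m+r\right]\le2\exp\left(-r^{2}/\left(4c^{2}\left(m+r\right)\right)\right)$. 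Your Efron--Stein step is also sound: resampling a trial outside a certificate for the level $s=Z$ cannot decrease $Z$, so $\sum_{i}\left(Z-Z_{i}'\right)_{+}^{2}\le c^{2}Z$, hence $\mathrm{Var}\left(Z\right)\le c^{2}\E\left[Z\right]$ and, by Chebyshev, $\left|m-\E\left[Z\right]\right|\le2c\sqrt{\E\left[Z\right]}$; then $t\ge2\E\left[Z\right]+80c\sqrt{\E\left[Z\right]}$ forces $m\le t/2$ and $\left(t-m\right)^{2}/\left(4c^{2}t\right)\ge t/\left(16c^{2}\right)$, with ample slack in the constants $2$ and $80$. (Molloy and Reed pass from median to mean differently, by integrating the median tail; your variance bound is a legitimate and arguably cleaner shortcut.) The one genuinely unproved ingredient is the heart of the matter, the isoperimetric inequality $\mu\left(A\right)\int\exp\left(d_{T}\left(x,A\right)^{2}/4\right)d\mu\le1$: your one-sentence description of the induction (the two projections and the H\"older interpolation with an optimized exponent) is the correct shape of Talagrand's argument, but as written it is a summary rather than a proof, so a complete write-up would either have to carry out that induction in full or cite it --- which is precisely what the paper itself does for the whole theorem.
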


\section{Proof of the main result}

In this section we prove Theorem~\ref{thm:main}.

\subsection{Outline}

The construction is done in two phases:
\begin{enumerate}[label=\Roman*.]
\item \label{enu:phase-1}Apply Theorem~\ref{thm:partial-steiner-ext}
to get a $t$-$\left(n,k,\left\{ 0,1\right\} \right)$-design
$\mathcal{H}=\left(X,\mathcal{F}\right)$ with
property~$\left(\clubsuit\right)$ with respect to some
$0<\varepsilon<1$.
\item \label{enu:phase-2}Build another $t$-$\left(n,k,\left\{ 0,1\right\} \right)$-design
$\mathcal{H}'=\left(X,\mathcal{F}'\right)$ that covers the uncovered
$t$-sets $\mathcal{L}_{\mathcal{H}}$.
\end{enumerate}
Combining both designs, we get that every $t$-set is covered at least
once but no more than twice; namely $\left(X,\mathcal{F}\cup\mathcal{F}'\right)$
is a $t$-$\left(n,k,\left\{ 1,2\right\} \right)$-design, as required.

\medskip{}

We now describe how to build $\mathcal{H}'$. For a set $A\subset X$,
denote by $\mathcal{T}_{A}=\left\{ C\subseteq X:\left|C\right|=k\mbox{ and }A\subseteq C\right\} $
the family of possible continuations of $A$ to a subset of $X$ of
cardinality $k$. Note that $\mathcal{T}_{A}=\varnothing$ when $\left|A\right|>k$.

Consider the leave hypergraph $\left(X,\mathcal{L}_{\mathcal{H}}\right)$.
Our goal is to choose, for every uncovered $t$-set $A\in\mathcal{L}_{\mathcal{H}}$,
a $k$-set $A'\in\mathcal{T}_{A}$ such that $\left|A'\cap B'\right|<t$
for every two distinct $A,B\in\mathcal{L}_{\mathcal{H}}$. This ensures
that the obtained hypergraph $\mathcal{H}'=\left(X,\left\{ A':A\in\mathcal{L}_{\mathcal{H}}\right\} \right)$
is indeed a $t$-$\left(n,k,\left\{ 0,1\right\} \right)$-design.

To this aim, for every $A\in\mathcal{L}_{\mathcal{H}}$ we introduce
intermediate lists $\mathcal{R}_{A}\subseteq\mathcal{S}_{A}\subseteq\mathcal{T}_{A}$
that will help us control the cardinalities of pairwise intersections
when choosing $A'\in\mathcal{R}_{A}$. First note that we surely cannot
afford to consider continuations that fully contain some other $B\in\mathcal{L}_{\mathcal{H}}$,
so we restrict ourselves to the list
\[
\mathcal{S}_{A}=\mathcal{T}_{A}\setminus\bigcup\left\{ \mathcal{T}_{B}:B\in\mathcal{L}_{H}\setminus\left\{ A\right\} \right\} =\left\{ C\in\mathcal{T}_{A}:B\not\subseteq C\mbox{ for all }B\in\mathcal{L}_{H},B\ne A\right\} .
\]
Note that, by definition, the lists $\mathcal{S}_{A}$ for different
$A$ are disjoint. Next, choose a much smaller sub-list
$\mathcal{R}_{A}\subseteq\mathcal{S}_{A}$ by picking each
$C\in\mathcal{S}_{A}$ to $\mathcal{R}_{A}$ independently at random
with probability $p=n^{t-k+\varepsilon/2}$ (we can of course assume
here and later that $\varepsilon<1\le k-t$, and thus $0<p<1$).
Finally, select $A'\in\mathcal{R}_{A}$ that has no intersection of
size at least $t$ with any $C\in\mathcal{R}_{B}$ for any other
$B\in\mathcal{L}_{\mathcal{H}}$. If there is such a choice for every
$A\in\mathcal{L}_{\mathcal{H}}$, we get $\left|A'\cap B'\right|<t$
for distinct $A,B\in\mathcal{L}_{\mathcal{H}}$, as requested.

\bigskip{}

\subsection{Details}

We start by showing that the lists $\mathcal{S}_{A}$ are large enough.
\begin{claim}
\label{clm:many-potential-candidates}For every $A\in\mathcal{L}_{\mathcal{H}}$
we have $\left|\mathcal{S}_{A}\right|=\Theta\left(n^{k-t}\right)$.\end{claim}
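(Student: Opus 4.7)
\smallskip

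The plan is to obtain the upper and lower bounds separately. The upper bound is immediate, since trivially $\left|\mathcal{S}_A\right|\le\left|\mathcal{T}_A\right|=\binom{n-t}{k-t}=\Theta\left(n^{k-t}\right)$. For the lower bound, by inclusion--exclusion (used only as a union bound here) it suffices to show that
\[
\sum_{B\in\mathcal{L}_{\mathcal{H}}\setminus\left\{ A\right\} }\left|\mathcal{T}_{A}\cap\mathcal{T}_{B}\right|=o\left(n^{k-t}\right),
\]
because a $k$-set lies in $\mathcal{T}_{A}\setminus\mathcal{S}_{A}$ iff it lies in some $\mathcal{T}_{B}$ with $B\in\mathcal{L}_{\mathcal{H}}\setminus\left\{ A\right\} $ and contains $A$.

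The natural way to bound the sum is to partition the $B$'s according to $j=\left|A\cap B\right|$, which ranges over $0,1,\ldots,t-1$ (it cannot equal $t$ since $B\ne A$). For each fixed $j$, I would first count the $B$'s and then, for each such $B$, count the $k$-sets in $\mathcal{T}_{A}\cap\mathcal{T}_{B}$. For the first count I intend to invoke property $\left(\clubsuit\right)$ applied with $\ell=j<t$: for each of the $\binom{t}{j}$ choices of a $j$-subset $A_0\subseteq A$, the number of $B\in\mathcal{L}_{\mathcal{H}}$ with $A_0\subseteq B$ is $O\left(n^{t-j-\varepsilon}\right)$, so the number of $B$'s with $\left|A\cap B\right|\ge j$ along some $j$-subset of $A$ is $O\left(n^{t-j-\varepsilon}\right)$ (the constant absorbs $\binom{t}{j}$). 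For the second count, a $k$-set in $\mathcal{T}_{A}\cap\mathcal{T}_{B}$ must contain $A\cup B$, which has $\left|A\cup B\right|=2t-j$ elements, so there are $\binom{n-(2t-j)}{k-(2t-j)}=O\left(n^{k-2t+j}\right)$ of them.

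Multiplying these two estimates yields $O\left(n^{t-j-\varepsilon}\right)\cdot O\left(n^{k-2t+j}\right)=O\left(n^{k-t-\varepsilon}\right)$ for each $j$, and summing over the $t$ values of $j$ still gives $O\left(n^{k-t-\varepsilon}\right)=o\left(n^{k-t}\right)$. Hence
\[
\left|\mathcal{S}_{A}\right|\ge\left|\mathcal{T}_{A}\right|-O\left(n^{k-t-\varepsilon}\right)=\Theta\left(n^{k-t}\right),
\]
as desired. The only mildly delicate point is to ensure that the inner count via $\left(\clubsuit\right)$ is applied to a set of size $j<t$ so that the hypothesis is satisfied; this is exactly why the parameter $\varepsilon$ in Theorem~\ref{thm:partial-steiner-ext} is strengthened to hold for \emph{all} $\ell<t$, rather than only $\ell=0$, and this is the step where the strengthened form of R\"odl's theorem is genuinely needed.
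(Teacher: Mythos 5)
Your proposal is correct and follows essentially the same route as the paper: both decompose $\mathcal{L}_{\mathcal{H}}\setminus\left\{ A\right\}$ according to the intersection size $\left|A\cap B\right|=\ell<t$, bound the number of such $B$ by $O\left(n^{t-\ell-\varepsilon}\right)$ via property $\left(\clubsuit\right)$, and bound $\left|\mathcal{T}_{A}\cap\mathcal{T}_{B}\right|=\left|\mathcal{T}_{A\cup B}\right|$ by $O\left(n^{k-2t+\ell}\right)$, giving $\left|\mathcal{T}_{A}\setminus\mathcal{S}_{A}\right|=O\left(n^{k-t-\varepsilon}\right)=o\left(n^{k-t}\right)$. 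Your closing remark correctly identifies why the strengthened form of R\"odl's theorem (all $\ell<t$, not just $\ell=0$) is needed here.
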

\begin{proof}
Fix $A\in\mathcal{L}_{\mathcal{H}}$. Obviously
$\left|\mathcal{T}_{A}\right|=\binom{n-t}{k-t}=\Theta\left(n^{k-t}\right)$.
Since $\mathcal{S}_{A}\subseteq\mathcal{T}_{A}$, it suffices to show
that
$\left|\mathcal{T}_{A}\setminus\mathcal{S}_{A}\right|=o\left(n^{k-t}\right)$.

Writing $\mathcal{L}_{H}\setminus\left\{ A\right\} $ as the disjoint
union $\bigcup_{\ell=0}^{t-1}\mathcal{B}_{\ell}$, where $\mathcal{B}_{\ell}=\left\{ B\in\mathcal{L}_{H}:\left|A\cap B\right|=\ell\right\} $,
we have
\begin{align*}
\mathcal{T}_{A}\setminus\mathcal{S}_{A} & =\left\{ C\in\mathcal{T}_{A}:\exists B\in\mathcal{L}_{H}\setminus\left\{ A\right\} \mbox{ such that }C\in\mathcal{T}_{B}\right\} \\
 & =\bigcup_{\ell=0}^{t-1}\left\{ C\in\mathcal{T}_{A}:\exists B\in\mathcal{B}_{\ell}\mbox{ such that }C\in\mathcal{T}_{B}\right\} \\
 & =\bigcup_{\ell=0}^{t-1}\bigcup_{B\in\mathcal{B}_{\ell}}\mathcal{T}_{A\cup B}.
\end{align*}
Note that for all $0\le\ell<t$ and for all $B\in\mathcal{B}_{\ell}$,
$\left|A\cup B\right|=2t-\ell$ and thus $\left|\mathcal{T}_{A\cup
B}\right|=\binom{n-2t+\ell}{k-2t+\ell}\le n^{k-2t+\ell}$. Moreover,
$\left|\mathcal{B}_{\ell}\right|=\binom{t}{\ell}\cdot
O\left(n^{t-\ell-\varepsilon}\right)=O\left(n^{t-\ell-\varepsilon}\right)$
by Property $\left(\clubsuit\right)$. Thus,
\[
\left|\mathcal{T}_{A}\setminus\mathcal{S}_{A}\right|\le\sum_{\ell=0}^{t-1}\left|\mathcal{B}_{\ell}\right|n^{k-2t+\ell}=\ell\cdot
O\left(n^{k-t-\varepsilon}\right)=o\left(n^{k-t}\right),
\]
establishing the claim.
\end{proof}
Recall that the sub-list $\mathcal{R}_{A}\subseteq\mathcal{S}_{A}$
was obtained by picking each $C\in\mathcal{S}_{A}$ to
$\mathcal{R}_{A}$ independently at random with probability
$p=n^{t-k+\varepsilon/2}$. The next claim shows that
$\mathcal{R}_{A}$ typically contains many $k$-sets whose pairwise
intersections are exactly $A$. This will be used in the proof of
Claim~\ref{clm:one-good-set}.
\begin{claim}
\label{clm:many-disjoint-candidates}Almost surely (i.e., with
probability tending to $1$ as $n$ tends to infinity), for every
$A\in\mathcal{L}_{H}$, the family $\mathcal{R}_{A}$ contains a
subset $\mathcal{Q}_{A}\subseteq\mathcal{R}_{A}$ of size
$\Theta\left(n^{\varepsilon/3}\right)$ such that $C_{1}\cap C_{2}=A$
for every two distinct $C_{1},C_{2}\in\mathcal{Q}_{A}$.\end{claim}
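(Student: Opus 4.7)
The plan is to construct $\mathcal{Q}_A$ as a large independent set in the \emph{conflict graph} $G_A$ with vertex set $\mathcal{R}_A$, where two distinct $C_1,C_2\in\mathcal{R}_A$ are joined whenever $\left|C_1\cap C_2\right|>t$; equivalently, when $C_1\setminus A$ and $C_2\setminus A$ share a vertex of $X\setminus A$. An independent set in $G_A$ is a family of $k$-sets containing $A$ whose pairwise intersection equals $A$, so the task reduces to proving two things simultaneously for all $A\in\mathcal{L}_{\mathcal{H}}$: that $\left|\mathcal{R}_A\right|$ is $\Omega\left(n^{\varepsilon/2}\right)$, and that $G_A$ has maximum degree bounded by a constant. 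A greedy maximal independent set then provides $\mathcal{Q}_A$ after truncation.

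For the first point, by Claim~\ref{clm:many-potential-candidates}, $\left|\mathcal{R}_A\right|$ is a binomial random variable with mean $p\left|\mathcal{S}_A\right|=\Theta\left(n^{\varepsilon/2}\right)$, and a Chernoff bound gives $\Pr\!\left[\left|\mathcal{R}_A\right|<\frac{1}{2}p\left|\mathcal{S}_A\right|\right]\le\exp\!\left(-\Omega\left(n^{\varepsilon/2}\right)\right)$, easily absorbed by a union bound over the at most $O\left(n^{t-\varepsilon}\right)$ sets $A\in\mathcal{L}_{\mathcal{H}}$. For the second point, fix $A$ and $C_1\in\mathcal{S}_A$: every $C'\in\mathcal{S}_A$ with $\left|C_1\cap C'\right|>t$ must contain $A\cup\{v\}$ for some $v\in C_1\setminus A$, so there are at most $(k-t)\binom{n-t-1}{k-t-1}=O\left(n^{k-t-1}\right)$ such $C'$. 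Consequently the degree of $C_1$ in $G_A$ is stochastically dominated by $\mathrm{Bin}\left(O\left(n^{k-t-1}\right),p\right)$, of mean $O\left(n^{\varepsilon/2-1}\right)$, and for any constant $M$,
\[
\Pr\!\left[\deg_{G_A}\!\left(C_1\right)\ge M\right]\le\binom{O\left(n^{k-t-1}\right)}{M}p^{M}=O\!\left(n^{M\left(\varepsilon/2-1\right)}\right).
\]
The number of pairs $(A,C_1)$ with $A\in\mathcal{L}_{\mathcal{H}}$ and $C_1\in\mathcal{S}_A$ is at most $O\left(n^{t-\varepsilon}\right)\cdot O\left(n^{k-t}\right)=O\left(n^{k-\varepsilon}\right)$, so choosing a constant $M=M(k)$ with $M(1-\varepsilon/2)>k$ makes the union bound $O\left(n^{k-\varepsilon+M\left(\varepsilon/2-1\right)}\right)=o(1)$; thus almost surely every $G_A$ has maximum degree strictly less than $M$.

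On the intersection of these two events, a greedy maximal independent set in each $G_A$ has size at least $\left|\mathcal{R}_A\right|/M=\Omega\left(n^{\varepsilon/2}\right)$, and since $\varepsilon/3<\varepsilon/2$ we truncate it to a subset $\mathcal{Q}_A\subseteq\mathcal{R}_A$ of size exactly $\left\lceil n^{\varepsilon/3}\right\rceil=\Theta\left(n^{\varepsilon/3}\right)$ with the required pairwise-intersection property. The main obstacle is that the union bound must overcome a polynomial factor of $n^{k-\varepsilon}$, which rules out the naive hope of showing $G_A$ is edgeless (the expected number of conflict pairs per $A$ is $O\left(n^{\varepsilon-1}\right)=o(1)$, but Markov-type arguments are too weak against this polynomial union bound); the trick is to instead aim for a constant upper bound on degrees and then settle for losing a constant factor in the size of the independent set.
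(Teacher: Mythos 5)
Your proof is correct, but it takes a genuinely different route from the paper's. The paper builds $\mathcal{Q}_{A}$ greedily and analyzes the failure event directly: if the greedy process stalls after $s<n^{\varepsilon/3}$ steps, then every member of $\mathcal{S}_{A}$ meeting the union $X_{A}$ of the chosen sets only in $A$ --- and there are $\Theta\left(n^{k-t}\right)$ such members --- must have been left out of $\mathcal{R}_{A}$, an event of probability $\left(1-p\right)^{\Theta\left(n^{k-t}\right)}=\exp\left(-\Theta\left(n^{\varepsilon/2}\right)\right)$; a union bound over the $\binom{n}{kn^{\varepsilon/3}}=\exp\left(O\left(n^{\varepsilon/3}\log n\right)\right)$ possible witness sets $X_{A}$ (this is exactly where the exponent $\varepsilon/3<\varepsilon/2$ is used) and then over $A$ finishes the argument. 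You instead split the task into a Chernoff lower bound on $\left|\mathcal{R}_{A}\right|$ and a first-moment bound showing the conflict graph $G_{A}$ has constant maximum degree, then extract a greedy independent set. Both arguments are sound. The paper's single large-deviation estimate gives an exponentially small failure probability per $A$, making the union bound over $A$ effortless; your degree bound is only polynomially small per pair $\left(A,C_{1}\right)$, which is why you must push the constant $M$ high enough (any $M>2k$ works, since $\varepsilon<1$) for the union bound over the $O\left(n^{k-\varepsilon}\right)$ pairs to close --- you handle this correctly, and you also rightly observe that aiming for an edgeless conflict graph would not survive that union bound. A side benefit of your version is that it actually produces $\mathcal{Q}_{A}$ of size $\Omega\left(n^{\varepsilon/2}\right)$ before truncation, slightly more than the claim requires.
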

\begin{proof}
Fix $A\in\mathcal{L}_{H}$. Construct $\mathcal{Q}_{A}$ greedily as
follows: start with $\mathcal{Q}_{A}=\varnothing$; as long as
$\left|\mathcal{Q}_{A}\right|<n^{\varepsilon/3}$ and there exists
$C\in\mathcal{R}_{A}\setminus\mathcal{Q}_{A}$ such that $C\cap C'=A$
for all $C'\in\mathcal{Q}_{A}$, add $C$ to $\mathcal{Q}_{A}$. It
suffices to show that this process continues $\left\lfloor
n^{\varepsilon/3}\right\rfloor$ steps.

If the process halts after $s<\left\lfloor
n^{\varepsilon/3}\right\rfloor$ steps, then every $k$-tuple of
$\mathcal{R}_{A}$ intersects one of the $s$ previously chosen sets
in some vertex outside $A$. This means that there exists a subset
$X_A\subset X$  of cardinality $|X_A|=kn^{\varepsilon/3}$ ($X_A$
contains the union of these $s$ previously picked sets) such that
none of the edges $C$ of $\mathcal{S}_{A}$  satisfying $C\cap X_A=A$
is chosen into $\mathcal{R}_{A}$. For bounding the number of such
edges in $\mathcal S_A$ from bellow, we need to subtract from
$|\mathcal S_{A}|$ the number of edges $C\in \mathcal T_{A}$ with
$C\cap X_A\neq A$. The latter can be bounded (from above) by
$\sum_{i=1}^{k-t}\binom{|X_A|}{i}\binom{n-|X_A|}{k-t-i}$ (choose
$1\leq i\leq k-t$ vertices from $X_A$, other than $A$, to be in $C$,
and then choose the remaining $k-t-i$ vertices from $X\setminus
X_A$). Since
$\left|\mathcal{S}_{A}\right|=\Theta\left(n^{k-t}\right)$, and since
$$\sum_{i=1}^{k-t}\binom{|X_A|}{i}\binom{n-|X_A|}{k-t-i}\leq
\sum_{i=1}^{k-t}\Theta(n^{i \varepsilon/3})\cdot
n^{k-t-i}=o(n^{k-t}),$$ we obtain that the number of such edges in
$\mathcal{S}_{A}$ is at least
$\left|\mathcal{S}_{A}\right|-\sum_{i=1}^{k-t}\binom{|X_A|}{i}\binom{n-|X_A|}{k-t-i}=\Theta\left(n^{k-t}\right)$.
It thus follows that the probability of the latter event to happen
for a given $A$ is at most
$$
\binom{n}{kn^{\varepsilon/3}}\,(1-p)^{\Theta\left(n^{k-t}\right)}
$$
(choose $X_A$ first, and then require all edges of $\mathcal{S}_{A}$
intersecting $X_A$ only at $A$ to be absent from $\mathcal{R}_{A}$).
The above estimate is clearly at most
$$
n^{n^{\varepsilon/3}}\cdot
e^{-\Theta\left(pn^{k-t}\right)}=\exp\left\{n^{\varepsilon/3}\ln
n-\Theta\left(n^{\varepsilon/2}\right)\right\}<
\exp\left\{-n^{\varepsilon/3}\right\}\,.
$$
Taking the union bound over all ($\leq {n \choose t}$) choices of
$A$ establishes the claim.
\end{proof}

The last step is to select a well-behaved set $A'\in\mathcal{R}_{A}$.
The next claim shows this is indeed possible.
\begin{claim}
\label{clm:one-good-set}Almost surely for every $A\in\mathcal{L}_{H}$
we can select $A'\in\mathcal{R}_{A}$ such that $\left|A'\cap C\right|<t$
for all $C\in\bigcup\left\{ \mathcal{R}_{B}:B\in\mathcal{L}_{H}\setminus\left\{ A\right\} \right\} $.\end{claim}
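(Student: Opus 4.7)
The plan is to condition on the almost-sure event of Claim~\ref{clm:many-disjoint-candidates} and then show that for every $A \in \mathcal{L}_H$ at least one $C \in \mathcal{Q}_A \subseteq \mathcal{R}_A$ is ``good''. Call $C \in \mathcal{Q}_A$ \emph{bad} if some $C' \in \bigcup_{B \in \mathcal{L}_H \setminus \{A\}} \mathcal{R}_B$ satisfies $|C \cap C'| \geq t$, let $N(C)$ count such witnesses, and set $W_A := \sum_{C \in \mathcal{Q}_A} N(C)$. Since every $C \in \mathcal{Q}_A$ being bad forces $W_A \geq q := |\mathcal{Q}_A| = \Theta(n^{\varepsilon/3})$, it suffices to bound $\prob[W_A \geq q]$ by something $\ll n^{-t}$ and take a union bound over the at most $n^t$ choices of $A$. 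Computing $\E[W_A]$ is routine: for fixed $C \in \mathcal{Q}_A \subseteq \mathcal{S}_A$ (which contains no leave edge besides $A$), classifying potential witness pairs $(B, C')$ by $i = |B \cap C| \in \{0, \dots, t-1\}$, using property~$\left(\clubsuit\right)$ to bound the number of admissible $B \in \mathcal{L}_H$ and counting $k$-set continuations $C' \supseteq B$ with $|C \cap C'| \geq t$, one obtains $\E[N(C)] \leq p \cdot O(n^{k-t-\varepsilon}) = O(n^{-\varepsilon/2})$, and hence $\E[W_A] = O(n^{-\varepsilon/6})$.

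The main obstacle is that this expectation alone is too weak: since $q$ is merely polynomial, Markov's inequality yields only $\prob[W_A \geq q] = O(n^{-\varepsilon/2})$, which cannot survive the union bound over the potentially $n^{t-\varepsilon}$ leave edges. The crucial structural observation is that for any fixed $C' \in \mathcal{S}_B$ with $B \neq A$ we have $j := |A \cap C'| \leq t-1$ (otherwise $C'$ would contain two distinct leave edges); and since the sets $\{C \setminus A : C \in \mathcal{Q}_A\}$ are pairwise disjoint and each $C$ with $|C \cap C'| \geq t$ must contribute at least $t - j \geq 1$ elements to $C' \setminus A$, at most $k - j \leq k$ sets from $\mathcal{Q}_A$ can be made bad by the same $C'$.

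Writing $W_A = \sum_{(B, C')} a_{B,C'} \mathbb{I}[C' \in \mathcal{R}_B]$, the above becomes $a_{B,C'} \leq k$. Since the indicators are mutually independent (the $\mathcal{S}_B$'s being disjoint) and independent of $\mathcal{R}_A$, hence of $\mathcal{Q}_A$, after conditioning on $\mathcal{Q}_A$ the variable $W_A$ is a $k$-Lipschitz, trivially certifiable function of independent trials (a certificate for $W_A \geq s$ is any subset of at most $s$ successful trials with $a_{B,C'} \geq 1$ whose $a$-sum reaches $s$). Talagrand's inequality (Theorem~\ref{thm:talagrand-ineq}) with $c = k$ and threshold $2\E[W_A] + 80c\sqrt{\E[W_A]} = o(1) \ll q$ then yields
\[
\prob\left[W_A \geq q\right] \leq 2 \exp\left(-\frac{q}{16 k^2}\right) = \exp\left(-\Omega(n^{\varepsilon/3})\right),
\]
and a union bound over $A \in \mathcal{L}_H$ completes the proof. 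The linchpin is the constant-Lipschitz bound $a_{B,C'} \leq k$: without the sunflower structure of $\mathcal{Q}_A$ the Lipschitz constant would grow polynomially in $n$, and Talagrand would not defeat the $n^t$-size union bound.
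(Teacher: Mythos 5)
Your proposal is correct and follows essentially the same route as the paper: both exploit the sunflower structure of $\mathcal{Q}_A$ to obtain a $k$-Lipschitz, certifiable random variable with expectation $O\left(n^{-\varepsilon/6}\right)$, apply Talagrand's inequality at threshold $\Theta\left(n^{\varepsilon/3}\right)$, and finish with a union bound over $A$. The only cosmetic difference is that you count (bad set, witness) pairs via $W_A$ whereas the paper counts the bad sets $A'\in\mathcal{Q}_A$ themselves; the two variables are interchangeable in this argument.
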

\begin{proof}
Fix $A\in\mathcal{L}_{H}$ and fix all the random choices which
determine the list $\mathcal{R}_{A}$ such that it satisfies
Claim~\ref{clm:many-disjoint-candidates}. Let
$\mathcal{Q}_{A}\subseteq\mathcal{R}_{A}$ be as provided by
Claim~\ref{clm:many-disjoint-candidates} and let
$\mathcal{R}=\bigcup\left\{
\mathcal{R}_{B}:B\in\mathcal{L}_{H}\setminus\left\{ A\right\}
\right\} $ be the random family of all obstacle sets.  Define the
random variable $Z$ to be the number of sets $A'\in\mathcal{Q}_{A}$
for which $\left|A'\cap C\right|\ge t$ for some $C\in\mathcal{R}$.
Since $\mathcal{S}_{A}$ is disjoint from $\mathcal{S}=\bigcup\left\{
\mathcal{S}_{B}:B\in\mathcal{L}_{H}\setminus\left\{ A\right\}
\right\} $, we can view $\mathcal{R}$ as a random subset of
$\mathcal{S}$, with each element selected to $\mathcal{R}$
independently with probability $p=n^{t-k+\varepsilon/2}$. Thus $Z$
is determined by $\left|\mathcal{S}\right|$ independent trials. We
wish to show that $Z$ is not too large via
Theorem~\ref{thm:talagrand-ineq}; for this, $Z$ has to satisfy the
two conditions therein.
\begin{enumerate}
\item If $C\in\mathcal{S}$ satisfies $\left|A'\cap C\right|\ge t$ for
some $A'\in\mathcal{Q}_{A}$ then $C\setminus A$ must intersect $A'\setminus A$
(since $A\not\subseteq C$). However, the $\left(k-t\right)$-sets
$\left\{ A'\setminus A:A'\in\mathcal{Q}_{A}\right\} $ are pairwise
disjoint (by the definition of $\mathcal{Q}_{A}$) so each $C$ cannot
rule out more than $k$ different sets $A'\in\mathcal{Q}_{A}$. Thus
$Z$ is $k$-Lipschitz.
\item Assume that $Z\ge s$. Then, by definition, there exist distinct sets
$A'_{1},\ldots,A_{s}'\in\mathcal{Q}_{A}$ and (not necessarily distinct)
sets $C_{1},\ldots,C_{s}\in\mathcal{R}$ such that $\left|A_{i}'\cap C_{i}\right|\ge t$
for $i=1,\ldots,s$. These are at most $s$ trials whose outcomes
ensure that $Z\ge s$; i.e., $Z$ is certifiable.
\end{enumerate}
Let us now calculate $\E\left[Z\right]$. Fix $A'\in\mathcal{Q}_{A}$
and let $Z_{A'}$ be the indicator random variable of the event $E_{A'}=\left\{ \exists C\in\mathcal{R}:\left|A'\cap C\right|\ge t\right\} $.
The only set in $\mathcal{L}_{H}$ fully contained in $A'$ is $A$,
so we can write $\mathcal{L}_{H}\setminus\left\{ A\right\} $ as the
disjoint union $\bigcup_{\ell=0}^{t-1}\mathcal{B}_{\ell}'$, where
$\mathcal{B}_{\ell}'=\left\{ B\in\mathcal{L}_{H}:\left|A'\cap B\right|=\ell\right\} $.
For any $0\le\ell<t$ and $B\in\mathcal{B}_{\ell}'$, the number of
bad sets (i.e., sets that will trigger $E_{A'}$) in $\mathcal{S}_{B}$
is
\begin{align*}
\left|\left\{ C\in\mathcal{S}_{B}:\left|A'\cap C\right|\ge t\right\} \right| & \le\left|\left\{ C\in\mathcal{T}_{B}:\left|A'\cap C\right|\ge t\right\} \right|\\
 & =\left|\left\{ C\in\mathcal{T}_{B}:\left|\left(A'\cap C\right)\setminus B\right|\ge t-\ell\right\} \right|\\
 & =\sum_{i=t-\ell}^{k-t}\binom{k-\ell}{i}\binom{n-k-t+\ell}{k-t-i}=O\left(n^{k-2t+\ell}\right),
\end{align*}
since $C$ contains $B$, $\left|B\right|=t$, together with $i\ge
t-\ell$ elements from $A'\setminus B$ and the rest from
$X\setminus\left(A'\cup B\right)$. Each such bad set ends up in
$\mathcal{R}_{B}$ with probability $p=n^{\varepsilon/2+t-k}$, so the
expected number of bad sets in $\mathcal{R}_{B}$ is
$O\left(n^{\varepsilon/2-t+\ell}\right)$. By
Property~$\left(\clubsuit\right)$ we have
$\left|\mathcal{B}_{\ell}'\right|=O\left(n^{t-\ell-\varepsilon}\right)$
and thus the total expected number of bad sets in $\mathcal{R}$ is
$\ell\cdot
O\left(n^{\varepsilon/2-\varepsilon}\right)=O\left(n^{-\varepsilon/2}\right)$.
By Markov's inequality we have
\[
\prob\left[E_{A'}\right]=O\left(n^{-\varepsilon/2}\right).
\]
Now, $Z$ is a sum of
$\left|\mathcal{Q}_{A}\right|=\Theta\left(n^{\varepsilon/3}\right)$
 random variables $Z_{A'}$ and thus
\[
\E\left[Z\right]=\left|\mathcal{Q}_{A}\right|\cdot\E\left[Z_{A'}\right]=\left|\mathcal{Q}_{A}\right|\cdot\prob\left[E_{A'}\right]=O\left(n^{-\varepsilon/6}\right)=o\left(1\right).
\]
Applying Theorem~\ref{thm:talagrand-ineq} with
$t=\left|\mathcal{Q}_{A}\right|=\Theta\left(n^{\varepsilon/3}\right)$
and $c=k=O\left(1\right)$, we get that
\[
\prob\left[\mbox{all \ensuremath{A'\in\mathcal{R}_{A}} are ruled
out}\right]\le\prob\left[Z\ge\left|\mathcal{Q}_{A}\right|\right]<2\exp\left(-\Omega\left(n^{\varepsilon/3}\right)\right).
\]
Taking the union bound over all $\left|\mathcal{L}_{H}\right|\le\binom{n}{t}$
choices of $A\in\mathcal{L}_{H}$ establishes the claim.\end{proof}

\end{document}